\newtheorem{theorem}{Theorem}
\newtheorem{lemma}{Lemma}
\newtheorem{corollary}{Corollary}
\newtheorem{proof}{Proof}
\newtheorem{remark}{Remark}
\newcommand{\bea}{\begin{eqnarray}}
\newcommand{\eea}{\end{eqnarray}}
\begin{document}

\begin{frontmatter}
\title{A study on reliability of a $k$-out-of-$n$ system equipped with a cold standby component based on copula}
\author{Achintya Roy, Nitin Gupta }
\address{\normalfont Department of Mathematics,
Indian Institute of Technology Kharagpur, West Bengal, INDIA.\\
Email: acroy21@iitkgp.ac.in (Achintya Roy), nitin.gupta@maths.iitkgp.ac.in (Nitin Gupta).}
\begin{abstract}
A $k$-out-of-$n$ system consisting of $n$ exchangeable components equipped with a single cold standby component is considered.
All the components including standby are assumed to be dependent and this dependence is modeled by a copula function.
 An exact expression for the reliability function of the considered system has been obtained. We also compute three different mean residual life functions of the system.
  Finally, some numerical examples are provided to illustrate the theoretical results. Our studies subsume some of the initial studies in the literature.
\end{abstract}
\begin{keyword}
 Reliability \sep Exchangeable components \sep $k$-out-of-$n$ system\sep Standby redundancy\sep Mean residual life function\sep
Copula.
\end{keyword}
\end{frontmatter}
\section{Introduction}
\label{sec1}
Cold standby sparing is an effective technique to improve the reliability of a system. It has attracted substantial interest in the field of reliability theory due to its various applications in nuclear industry, satellites, and telecommunication systems. The purchasers or users of capital assets such as aircraft or military equipment always think about a guarantee of high availability since, the consequences of failure time can have very serious repercussions. For example, in the case of army equipment failure time may lead to mission failure.
A cold standby component is inactive and has no failure rate while in the standby condition. An inactive cold standby component becomes active at the time the principal system breaks down and keep the system functioning with the help of remaining active components of the failed principal system.
It is usually not possible to repair the engines of spacecrafts, satellites etc. on and during a space mission.
Hence, it may be desirable to reduce the risk of engines's failure by using cold standby sparing.
Many statisticians have investigated coherent systems with cold standby components when components are independent and identical (see \citet{eryilmaz2012mean}, \citet{eryilmaz2014study},\citet{wang2016conditional}, \citet{mirjalili2017note}, \citet{roy2019reliability}, and \citet{Roy2020reliability}). In real life situations the components which form a system may be dependent. The dependence may occur due to sharing same workloads such as heat and tasks. The standby components which are connected to a system become dependent with system's components due to their placement in the system and system's design. Some studies in this case are found in  \citet{navarro2007properties}, \citet{bairamov2011order}, \citet{sadegh2011note}, \citet{eryilmaz2012reliability1},  \citet{khanjari2016distribution}, \citet{yongjin2018copula}. These studies cover the investigation of lifetime characteristics such as reliability function, mean residual life function, mean time to failure and stochastic ordering results associated with system's lifetime. \par
A set of random variables can be either independent of each other or can be dependent.
In case of independent random variables, the joint distribution function can be written as
a product of the individual distribution functions. Whereas if the random variables are
dependent we require a copula function to determine the joint distribution function.
The concept of copulas was introduced by Sklar (1959).
 His fundamental theorem is given below: \\
\bf{Theorem A} [\normalfont Sklar's Theorem 1959]
Let $F$ be a $n$-dimensional cumulative distribution function with marginal
distribution functions $F_{i}, i = 1,\ldots,n$. Then there exists a copula $C$ such
that
$$F(y_{1},\ldots,y_{n})=C(F_{1}(y_{1})),\ldots,F_{n}(y_{n})).$$
Conversely, for any univariate cumulative distribution functions $F_{i}, i = 1,\ldots,n$
and any copula $C$, the function $F$ is a $n$-dimensional cumulative distribution
function with marginal distribution functions $F_{i}, i = 1,\ldots,n$.
Furthermore, if $F_{1},\ldots, F_{n}$ are continuous,
then C is unique. An extensive review of the copula function and its applications is given in \citet{nelsen2007introduction}.
\par
 As a generalization of series and parallel system, a $k$-out-of-$n$ system containing $n$ components is said to be operational  iff at least  $k~(k \leq n)$ components are operational (see  \citet{barlow1975statistical}). There is quite rich literature on reliability properties of $k$-out-of-$n$ systems consisting of $n$ independent components . Some recent results in this area are in \citet{bairamov2002residual}, \citet{asadi2005note},  \citet{li2006some}, \citet{navarro2010comparisons}, \citet{eryilmaz2011dynamic}, \citet{zhao2012optimal}, \citet{levitin2014cold}, and \citet{zhao2017optimal}. \citet{asadi2005note} obtained the mean residual life function $E(X_{n-k+1:n}-t|X_{1:n}>t); ~t\geq0.$  And the mean residual life function $E(X_{n-k+1:n}-t|X_{l:n}>t)$;~$1 \leq l<k  \leq n$  has been  investigated by \citet{li2006some}. \citet{li2006some} actually generalized the work of \citet{asadi2005note}.
 In reliability theory, the assumption of dependence of lifetimes of components is more realistic than assumption of independence.
  Exchangeability is a type of dependence which has attracted the interest of many statisticians in recent times.
 A set of random variables $Z_{1},\ldots,Z_{n}$ are said to be exchangeable if
 $$P(Z_{1}\leq z_{1},\ldots,Z_{n} \leq z_{n})=P(Z_{\alpha(1)}\leq z_{1},\ldots,Z_{\alpha(n)} \leq z_{n}),$$
 where $\alpha=(\alpha(1),\ldots,\alpha(n))$ is an arbitrary permutation of $\{1,\ldots,n\},$ that is, the joint distribution of
  $Z_{1},\ldots,Z_{n}$ is symmetric in  $z_{1},\ldots,z_{n}.$ It is clear~that $Z_{1},\ldots,Z_{n}$ are identically distributed.
  Note that independent and identically distributed random variables are also exchangeable random variables.
  The reliability properties of $k$-out-of-$n$ system with exchangeable components have been investigated by
   \citet{navarro2005note}, \citet{navarro2007properties},\citet{sadegh2011note}, \citet{khanjari2016distribution}, \citet{eryilmaz2020}. \par
 \citet{eryilmaz2012mean} investigated the importance of a single cold standby component towards increasing the $k$-out-of-$n$ system's reliability when components are independent and identically distributed. Here the standby component is applied when the $k$-out-of-$n$ system fails. He obtained the reliability function and three different mean residual functions of the $k$-out-of-$n$ system which is equipped with a single cold standby component. \citet{eryilmaz2012reliability1} studied reliability properties of a two-component series system equipped with single cold standby component when series system's components are exchangeable. The authors also assumed that all the components including standby component are identical and dependent. \citet{yongjin2018copula} have enlarged their study by considering a multi components parallel system.
 Motivated by their work, in this article, we consider a $k$-out-of-$n$ system consisting of $n$ exchangeable components and equipped with a single cold standby component. We assume that all the components including standby are dependent, and we use a given copula function to design this dependency. Let $Z$ and  $Z_{i}$ denote the lifetimes of the cold standby and  the $i$th $(i=1,2,\ldots,n)$ component of the k-out-of-n system respectively. Throughout this article, we denote distributions functions $P(Z_{i}\leq z)=F(z),~i=1,2,\ldots,n$, and $P(Z\leq z)=G(z),~z \geq 0.$ Let $f$ and $g$ are probability density functions of $Z_{i}$ and $Z$ respectively.
Let $C$ be a copula function of the lifetimes $Z_{1},\ldots,Z_{n}$ and $Z,$ then  by using Sklar's theorem (for more details about Sklar's theorem, one can see Theorem 2.10.9 in \citet{nelsen2007introduction}),
\begin{align}
P(Z_{1}\leq z_{1},\ldots,&Z_{n}\leq z_{n}, Z\leq z)=C(F(z_{1}),\ldots,F(z_{n}),G(z));
\end{align}
 \par
If $h(z_{1},\ldots,z_{n},z)$ represents the joint density function of lifetimes $Z_{1},\ldots,Z_{n}$ and $Z,$ then the density function, according to Sklar's theorem, can be written using copula function $C$  as
\begingroup
\allowdisplaybreaks
\begin{align}\label{3}
 h(z_{1},\ldots,z_{n},z)=c(F(z_{1}),\ldots,F(z_{n}),G(z))\Bigg[\prod_{i=1}^{n} f(z_{i})\Bigg] g(z),
\end{align}
\endgroup
where    $c:\textbf{I}^{n}\rightarrow \textbf{I},~\textbf{I}=[0,1],$  the density function of copula $C$ as follows
 \begin{align} c(F(z_{1}),\ldots,F(z_{n}),G(z))=\frac{\partial^{n+1} C(F(z_{1}),\ldots,F(z_{n}),G(z))}{\partial F(z_{1})\ldots\partial F(z_{n})\partial G(z)}.
 \end{align}
 It is clear that $\frac{\partial^{n+1} C(F(z_{1}),\ldots,F(z_{n}),G(z))}{\partial F(z_{1})\ldots\partial F(z_{n})\partial G(z)}=1$
 when $Z_{1},\ldots,Z_{n},$ and $Z$ are all independent.\\
  As components' lifetimes of the  $k$-out-of-$n$ system  are denoted by  $Z_{1},\ldots,Z_{n}$, then $Z_{n-k+1:n}$ is the failure time of the k-out-of-n system. And the cold standby component starts working at the random time $Z_{n-k+1:n}$.
Therefore, the considered system's lifetime is denoted by
\begin{align}
&T=Z_{n-k+1:n}+ \min\{Z_{n-k+2:n}-Z_{n-k+1:n},Z\},~k \geq 2;\nonumber\\ &T=Z_{n:n}+Z.
\end{align}
A system, which consists of components, is called coherent system if it is monotone and none of its components is irrelevant.
Standby component $Z$ is irrelevant to the performance of the considered system before random time $Z_{n-k+1:n}.$ In this sense,
the considered system with lifetimes $Z_{1},\ldots,Z_{n},~Z$ cannot be considered as a coherent system.
The idea of mean residual life function is an important reliability characteristics that has been extensively used in reliability analysis.
It is a useful tool for studying ageing properties of a system.
In this article, we study three different mean residual life (MRL) functions which are defined by
\begin{align} \label{mrl1}
\Psi_{1}(t)=E(T-t|T>t),
\end{align}
\begin{align} \label{mrl2}
\Psi_{2}(t)=E(T-t|Z_{n-k+1:n}>t)
\end{align}
\begin{align} \label{mrl3}
\Psi_{3}(t)=E(T-t|Z_{1:n}>t),
\end{align}
for $t\geq 0.$ \\
The rest of the article is organized as follows:
In Section 2,  first  we  derive  explicit expressions for $P(T>t)$ by using concept of order statistics in terms of copula functions.
After that, we have obtained representation of the mean residual functions defined by (\ref{mrl1})-(\ref{mrl3}). Also, some numerical examples are presented.
\section{Main results}
In this section we compute the reliability function of $T.$
\begin{theorem} \label{thm1}
 \normalfont {Reliability function  of  $T$ is given by}
 \begingroup
\allowdisplaybreaks
\begin{align}\label{m1}
P(T>s)=&1-\sum _{i=n-k+1}^{n} (-1)^{i-n+k-1}\binom{n}{i}\binom{i-1}{n-k}C(\underbrace{F(s),\ldots,F(s)}_{\text{i}},1,\ldots,1)+
\frac{1}{B(n-k+1,k)} \nonumber\\&\times\int_{0}^{s}\underbrace{\int_{0}^{z_{n}}\ldots\int_{0}^{z_{n}}}_{\text{n-k}}\underbrace{ \int_{s}^{\infty}\ldots\int_{s}^{\infty}}_{\text{k-1}}\int_{s-z_{n}}^{\infty}  \frac{\partial^{n+1} C(F(z_{1}),\ldots,F(z_{n}),G(z))}{\partial F(z_{1})\ldots\partial F(z_{n})\partial G(z)}\Bigg(\prod_{i=1}^{n} f(z_{i})\Bigg)~\nonumber\\&\times g(z) ~ dzdz_{1} \ldots dz_{k-1}dz_{k}\ldots dz_{n-1}dz_{n}.
\end{align}
\endgroup
\end{theorem}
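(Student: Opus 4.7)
The plan is to decompose the event $\{T>s\}$ according to whether the underlying $k$-out-of-$n$ core has already failed by time $s$. From the definition $T=Z_{n-k+1:n}+\min\{Z_{n-k+2:n}-Z_{n-k+1:n},Z\}$ one observes that $\{T>s\}$ is the disjoint union of $\{Z_{n-k+1:n}>s\}$ and $\{Z_{n-k+1:n}\leq s,\;Z_{n-k+2:n}>s,\;Z>s-Z_{n-k+1:n}\}$, since in the first case the sum exceeds $s$ automatically and in the second case the increment $\min\{Z_{n-k+2:n}-Z_{n-k+1:n},Z\}$ must exceed $s-Z_{n-k+1:n}$, which forces both arguments of the minimum to exceed this quantity. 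Hence
$$P(T>s)=P(Z_{n-k+1:n}>s)+P(Z_{n-k+1:n}\leq s,\;Z_{n-k+2:n}>s,\;Z>s-Z_{n-k+1:n}),$$
and the two pieces will be handled in turn via Sklar's theorem.

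For the first piece I would invoke the standard inclusion-exclusion formula for the CDF of the $r$-th order statistic of exchangeable random variables,
$$P(Z_{r:n}\leq s)=\sum_{i=r}^{n}(-1)^{i-r}\binom{n}{i}\binom{i-1}{r-1}P(Z_1\leq s,\ldots,Z_i\leq s),$$
obtained by decomposing $\{Z_{r:n}\leq s\}$ through the count $N(s)=\#\{j:Z_j\leq s\}$ and reducing alternating sums of binomials via the identity $\sum_{i=0}^{r-1}(-1)^i\binom{\ell}{i}=(-1)^{r-1}\binom{\ell-1}{r-1}$. Setting $r=n-k+1$ and using (1) with the $n+1-i$ trailing arguments of the copula set to $1$ (to marginalize over the remaining components and the standby) identifies $P(Z_1\leq s,\ldots,Z_i\leq s)$ with $C(F(s),\ldots,F(s),1,\ldots,1)$; subtracting the result from $1$ reproduces the first group of terms in (\ref{m1}).

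For the second piece I would exploit exchangeability of $(Z_1,\ldots,Z_n)$. The conjunction $\{Z_{n-k+1:n}\leq s,\;Z_{n-k+2:n}>s\}$ forces exactly $n-k+1$ of the component lifetimes to fall in $[0,s]$ and the remaining $k-1$ to fall in $(s,\infty)$, with $Z_{n-k+1:n}=z_n$ being the maximum of the ``small'' group; conditioning on the label of the maximizer leaves $n-k$ small components free to roam in $[0,z_n]$, and the condition $Z>s-Z_{n-k+1:n}$ becomes $z\in(s-z_n,\infty)$. Because the joint density (\ref{3}) is symmetric in its first $n$ arguments, summing over the $\binom{n}{n-k+1}(n-k+1)=n!/((n-k)!\,(k-1)!)=1/B(n-k+1,k)$ possible label assignments produces the stated multinomial prefactor in front of one representative symmetric $(n+1)$-fold integral with exactly the limits displayed in (\ref{m1}); substitution of the density from Sklar's theorem yields the integrand written there.

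The main obstacle, and the step demanding the most care, is the combinatorial bookkeeping. Specifically one must verify that the alternating inclusion-exclusion over counts of indices below $s$ collapses to the coefficients $(-1)^{i-n+k-1}\binom{n}{i}\binom{i-1}{n-k}$, and that the multinomial weight from distributing the $n$ exchangeable coordinates into the three groups ``small ($n-k$) / middle ($1$) / large ($k-1$)'' equals $n!/((n-k)!\,(k-1)!)=1/B(n-k+1,k)$. Once these identities are in place, adding the two contributions gives (\ref{m1}).
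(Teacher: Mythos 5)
Your proposal is correct and follows essentially the same route as the paper: the same disjoint decomposition of $\{T>s\}$ into $\{Z_{n-k+1:n}>s\}$ and $\{Z_{n-k+1:n}\leq s,\ Z_{n-k+2:n}>s,\ Z>s-Z_{n-k+1:n}\}$, the same inclusion--exclusion formula for the order statistic of exchangeable variables expressed through the copula, and the same exchangeability argument producing the prefactor $n\binom{n-1}{n-k}=n!/((n-k)!(k-1)!)=1/B(n-k+1,k)$ in front of a single representative integral. The only cosmetic difference is that the paper carries the extra conditioning event $\{Z_{n-k+1:n}>t\}$ throughout and sets $t=0$ at the end, so that the same computation can be reused for the mean residual life results.
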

\begin{proof}
\normalfont
It is clear that,
\begin{align}\label{k8}
P(T>s, Z_{n-k+1:n}>t)&=P(Z_{n-k+1:n}>s)\nonumber\\ &+P(Z_{n-k+1:n}+\min\{Z_{n-k+2:n}-Z_{n-k+1:n},Z\}>s, t\leq Z_{n-k+1:n} \leq s)
\end{align}
Reliability function of the $k$-out-of-$n$ system is given by
\begin{align}\label{k8111}
P(Z_{n-k+1:n}>s)&=1-\sum _{i=n-k+1}^{n} (-1)^{i-n+k-1}\binom{n}{i}\binom{i-1}{n-k}P(Z_{1}\leq s,\ldots,Z_{i} \leq s)\nonumber\\&
=1-\sum _{i=n-k+1}^{n} (-1)^{i-n+k-1}\binom{n}{i}\binom{i-1}{n-k}C(\underbrace{F(s),\ldots,F(s)}_{\text{i}},1,\ldots,1)
\end{align}
Since $Z_{1},\ldots,Z_{n}$ are identical and $Z_{n-k+1:n} \in \{Z_{1},\ldots, Z_{n}\}$, using the total probability law, we have
\begin{align}\label{k81}
 P(Z_{n-k+1:n}+\min\{Z_{n-k+2:n}&-Z_{n-k+1:n},Z\}>s, t\leq Z_{n-k+1:n} \leq s)\nonumber\\&
 =nP(Z_{n}+Z>s, t\leq Z_{n}\leq s,Z_{n-k+2:n}>s,Z_{n-k+1:n}=Z_{n})
\end{align}
$Z_{n-k+1:n}=Z_{n}$ implies that $k-1$ of $Z_{1},\ldots, Z_{n-1}$ are greater than $Z_{n}$ and others are less than  $Z_{n}$. Since $Z_{1},\ldots,Z_{n}$ are identical, we can assume that $Z_{1},\ldots, Z_{k-1}$ are greater than $Z_{n}$ and others are less than $Z_{n}$.
Therefore,
\begin{align}\label{k811}
 &P(Z_{n-k+1:n}+\min\{Z_{n-k+2:n}-Z_{n-k+1:n},Z\}>s,t\leq Z_{n-k+1:n} \leq s)\nonumber\\&
 =n \binom{n-1}{n-k}P(Z_{1}>s,\ldots, Z_{k-1}> s,Z_{n}+Z>s,Z_{k}< Z_{n},\ldots,Z_{n-1}< Z_{n}, t\leq Z_{n}<s)
  \nonumber\\&=\frac{1}{B(n-k+1,k)}  P(Z_{1}>s,\ldots, Z_{k-1}> s,Z_{n}+Z>s,Z_{k}< Z_{n},\ldots,Z_{n-1}< Z_{n}, t\leq Z_{n}<s)\nonumber\\&
  =\frac{1}{B(n-k+1,k)}\int_{t}^{s}\underbrace{\int_{0}^{z_{n}}\ldots\int_{0}^{z_{n}}}_{\text{n-k}}\underbrace{ \int_{s}^{\infty}\ldots\int_{s}^{\infty}}_{\text{k-1}}\int_{s-z_{n}}^{\infty}  \frac{\partial^{n+1} C(F(z_{1}),\ldots,F(z_{n}),G(z))}{\partial F(z_{1})\ldots\partial F(z_{n})\partial G(z)} \times \nonumber\\&~~~~~~~~~~~~~~~~~~~~~~~~~~~~~~~~~~~~~~~~~~~~~~~~~~~~~~~~~~~~~\Bigg(\prod_{i=1}^{n} f(z_{i})\Bigg)~ g(z) ~ dzdz_{1} \ldots dz_{k-1}dz_{k}\ldots dz_{n-1}dz_{n}.
   \end{align}
Therefore, for $t=0,$ from equations (\ref{k8})-(\ref{k811}), we have equation (\ref{m1}).
\end{proof}
\begin{remark}
\normalfont
If all the components are independent, that is,
if $\frac{\partial^{n+1} C(F(z_{1}),\ldots,F(z_{n}),G(z))}{\partial F(z_{1})\ldots\partial F(z_{n})\partial G(z)}=1,$ then the reliability function of $T$ is found to be
 \begingroup
\allowdisplaybreaks
\begin{align}
P(T>s)=&1-\sum _{i=n-k+1}^{n} (-1)^{i-n+k-1}\binom{n}{i}\binom{i-1}{n-k}F^{i}(s)+
\frac{1}{B(n-k+1,k)}\times \nonumber\\&\int_{0}^{s}\underbrace{\int_{0}^{z_{n}}\ldots\int_{0}^{z_{n}}}_{\text{n-k}}\underbrace{ \int_{s}^{\infty}\ldots\int_{s}^{\infty}}_{\text{k-1}}\int_{s-z_{n}}^{\infty} \Bigg(\prod_{i=1}^{n} f(z_{i})\Bigg) g(z)dzdz_{1} \ldots dz_{k-1}dz_{k}\ldots dz_{n-1}dz_{n}\nonumber\\&=P(Z_{n-k+1:n}>s)+\frac{\overline{F}^{k-1}(s)}{B(n-k+1,k)} \int_{0}^{s} \overline{G}(s-z)F^{n-k}(z)f(z)dz,
\end{align}
\endgroup
for $s>0.$ This case is considered in \citet{eryilmaz2012mean}.
\end{remark}
\begin{remark}
\normalfont
As $P(Z_{n-k+1:n}>s)$ is the reliability function of the $k$-out-of-$n$ system, then from equations (\ref{m1}) and (\ref{k8111}),
we can write  the contribution of the cold standby component to improve the reliability of the considered system is given by
 \begingroup
\allowdisplaybreaks
\begin{align*}
&\frac{1}{B(n-k+1,k)}\int_{0}^{s}\underbrace{\int_{0}^{z_{n}}\ldots\int_{0}^{z_{n}}}_{\text{n-k}}\underbrace{ \int_{s}^{\infty}\ldots\int_{s}^{\infty}}_{\text{k-1}}\int_{s-z_{n}}^{\infty}  \frac{\partial^{n+1} C(F(z_{1}),\ldots,F(z_{n}),G(z))}{\partial F(z_{1})\ldots\partial F(z_{n})\partial G(z)}\Bigg(\prod_{i=1}^{n} f(z_{i})\Bigg)~\nonumber\\&\times g(z) ~ dzdz_{1} \ldots dz_{k-1}dz_{k}\ldots dz_{n-1}dz_{n}.
\end{align*}
\endgroup
\end{remark}
\begin{corollary} \label{corollary1}
\normalfont
By using equation (\ref{m1}), the usual mean residual function of $T$ can be obtained from
\begingroup
\allowdisplaybreaks
\begin{align} \label{k36}
&\Psi_{1}(t)=E(T-t|T>t)
=\frac{1}{P(T>t)}\int_{t}^{\infty}P(T>s)ds,
\end{align}
\endgroup
and $\Psi_{1}(0)=E(T)$ gives the mean time to failure of $T$. \\
\end{corollary}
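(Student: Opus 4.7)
The plan is to derive the stated formula directly from the definition of conditional expectation, using the standard tail-integral representation of the mean of a nonnegative random variable, and then to apply Theorem~\ref{thm1} to make the expression explicit.

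First, I would write $\Psi_1(t)=E(T-t\mid T>t)$ as
\begin{equation*}
\Psi_1(t)=\frac{E\bigl[(T-t)\,\mathbf{1}_{\{T>t\}}\bigr]}{P(T>t)},
\end{equation*}
which is valid whenever $P(T>t)>0$. Since $T$ is a nonnegative lifetime, the numerator admits the tail-integral representation
\begin{equation*}
E\bigl[(T-t)\,\mathbf{1}_{\{T>t\}}\bigr]=\int_0^\infty P\bigl((T-t)\,\mathbf{1}_{\{T>t\}}>u\bigr)\,du=\int_0^\infty P(T>t+u)\,du,
\end{equation*}
where the last equality uses that $\{(T-t)\mathbf{1}_{\{T>t\}}>u\}=\{T>t+u\}$ for all $u\ge 0$. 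The change of variable $s=t+u$ then converts this into $\int_t^\infty P(T>s)\,ds$, giving the claimed formula for $\Psi_1(t)$. Substituting $t=0$ yields $\Psi_1(0)=E(T)=\int_0^\infty P(T>s)\,ds$, which is by definition the mean time to failure of the system.

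To complete the corollary, I would plug the explicit expression for $P(T>s)$ obtained in Theorem~\ref{thm1} into the displayed integral; no further manipulation is required since the statement only asserts the integral representation. No real obstacle is expected here: the result is a direct application of a well-known identity for the MRL of a nonnegative random variable, and the only hypothesis needed is that $T\ge 0$ (so that the tail-integral formula is valid) and $P(T>t)>0$ (so that conditioning is meaningful), both of which hold in the present setting.
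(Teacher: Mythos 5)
Your proof is correct and is exactly the standard tail-integral argument that the paper implicitly relies on: the corollary is stated without proof as a well-known identity for the MRL of a nonnegative lifetime, and your derivation (conditional expectation, the identity $\{(T-t)\mathbf{1}_{\{T>t\}}>u\}=\{T>t+u\}$, and the change of variable $s=t+u$) supplies precisely the justification the authors omit. No discrepancy with the paper's approach.
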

\begin{corollary} \label{corollary2}
\normalfont
Next, we compute the mean residual function of $T$ under ~the condition $Z_{n-k+1:n}>t,$ i.e., the $k$-out-of-$n$ system active at time t.
\begin{align}\label{mrl2}
\Psi_{2}(t)&=E(T-t|Z_{n-k+1:n}>t)\nonumber\\&
=\frac{1}{P(Z_{n-k+1:n}>t)}\int_{0}^{\infty}P(T>t+x,~Z_{n-k+1:n}>t)dx\nonumber\\&
=\frac{1}{P(Z_{n-k+1:n}>t)}\Bigg[\int_{t}^{\infty}P(Z_{n-k+1:n}>x) dx+
\nonumber\\&\int_{0}^{\infty}P(Z_{n-k+1:n}+\min\{Z_{n-k+2:n}-Z_{n-k+1:n},Z\}>t+x,t<Z_{n-k+1:n} \leq t+x) dx\Bigg]\nonumber\\&
=E(Z_{n-k+1:n}-t|Z_{n-k+1:n}>t)+\frac{1}{P(Z_{n-k+1:n}>t)}\times \nonumber\\& \Bigg[\frac{1}{B(n-k+1,k)}
\int_{0}^{\infty}\int_{t}^{t+x}\underbrace{\int_{0}^{z_{n}}\ldots\int_{0}^{z_{n}}}_{\text{n-k}}\underbrace{ \int_{t+x}^{\infty}\ldots\int_{t+x}^{\infty}}_{\text{k-1}}\int_{t+x-z_{n}}^{\infty}  \frac{\partial^{n+1} C(F(z_{1}),\ldots,F(z_{n}),G(z))}{\partial F(z_{1})\ldots\partial F(z_{n})\partial G(z)} \times \nonumber\\&
~~~~~~~~~~~~~~~~~~~~~~~~~~~~~~~~~~~~~~~~~~~~~\Bigg(\prod_{i=1}^{n} f(z_{i})\Bigg)~ g(z) ~ dzdz_{1} \ldots dz_{k-1}dz_{k}\ldots dz_{n-1}dz_{n} dx\Bigg].
\end{align}
Now if $Z_{1},\ldots,Z_{n},Z$ are independent, then
\begin{align}
E(T-t|&Z_{n-k+1:n}>t)
=E(Z_{n-k+1:n}-t|Z_{n-k+1:n}>t)+\frac{1}{P(Z_{n-k+1:n}>t)}\times \nonumber\\& \Bigg[\frac{1}{B(n-k+1,k)}
\int_{0}^{\infty}\int_{t}^{t+x} \overline{F}^{k-1}(t+x)\overline{G}(t+x-z_{n}) F^{n-k}(z_{n})f(z_{n}) dz_{n} dx\Bigg].
\end{align}
From equation (\ref{mrl2}), the contribution of the cold standby component to the mean time to failure of the considered system is given by
\begin{align}
&E(T-Z_{n-k+1:n})\nonumber\\&
=\frac{1}{B(n-k+1,k)}
\int_{0}^{\infty}\int_{0}^{x}\underbrace{\int_{0}^{z_{n}}\ldots\int_{0}^{z_{n}}}_{\text{n-k}}\underbrace{ \int_{x}^{\infty}\ldots\int_{x}^{\infty}}_{\text{k-1}}\int_{x-z_{n}}^{\infty}  \frac{\partial^{n+1} C(F(z_{1}),\ldots,F(z_{n}),G(z))}{\partial F(z_{1})\ldots\partial F(z_{n})\partial G(z)} \times \nonumber\\&
~~~~~~~~~~~~~~~~~~~~~~~~~~~~~~~~~~~~~~~~~~~~~\Bigg(\prod_{i=1}^{n} f(z_{i})\Bigg)~ g(z) ~ dzdz_{1} \ldots dz_{k-1}dz_{k}\ldots dz_{n-1}dz_{n} dx.
\end{align}
\end{corollary}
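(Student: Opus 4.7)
The plan is to derive $\Psi_2(t)$ by writing it as a tail integral and then invoking the same decomposition used in the proof of Theorem \ref{thm1}. Since $T \geq Z_{n-k+1:n}$ pointwise, the event $\{Z_{n-k+1:n} > t\}$ already forces $T > t$, so by the standard tail formula for the conditional mean of a nonnegative excess,
\[
E(T-t \mid Z_{n-k+1:n} > t) \;=\; \frac{1}{P(Z_{n-k+1:n} > t)} \int_0^\infty P\bigl(T > t+x,\, Z_{n-k+1:n} > t\bigr)\, dx.
\]
This is the form already written on the second line of the displayed formula for $\Psi_2(t)$, and it is the entry point into the argument.

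Next I would apply equation (\ref{k8}) with $s$ replaced by $t+x$ to split the joint tail probability into the part where the $k$-out-of-$n$ system is still functioning at $t+x$ and the part where it fails within $(t,\,t+x]$ but the standby keeps the overall system alive. Integrating over $x\in[0,\infty)$, the first summand contributes $\int_0^\infty P(Z_{n-k+1:n}>t+x)\,dx = \int_t^\infty P(Z_{n-k+1:n}>x)\,dx$, which, after division by $P(Z_{n-k+1:n}>t)$, is precisely $E(Z_{n-k+1:n}-t\mid Z_{n-k+1:n}>t)$, matching the first term of the claimed identity.

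For the second summand I would reuse the representation (\ref{k811}), which was derived in the proof of Theorem \ref{thm1} for exactly the event
$\{Z_{n-k+1:n}+\min\{Z_{n-k+2:n}-Z_{n-k+1:n},Z\}>s,\; t \leq Z_{n-k+1:n} \leq s\}$, with $s$ now taken as $t+x$. This substitutes $t+x$ into the inner $z_1,\dots,z_{k-1}$ and $z$ limits while the outermost $z_n$-integral runs over $[t,\,t+x]$, producing the $(n+1)$-fold copula-density integral in the statement, and the outer $dx$-integral is the $\int_0^\infty \cdots dx$ coming from the tail formula. The independent-case specialization is then obtained by setting the copula density to $1$ and carrying out the $n-1$ marginal integrations over $z_1,\dots,z_{n-1}$ and over $z$. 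The mean-time-to-failure decomposition $E(T-Z_{n-k+1:n})$ falls out by taking $t=0$ (where the conditioning becomes trivial since $P(Z_{n-k+1:n}>0)=1$), leaving only the second summand with its $z_n$-range becoming $[0,x]$.

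The main obstacle is keeping the iterated integration limits consistent: the role played by $s$ in equation (\ref{k811}) is now played by the running variable $t+x$, so both the upper bound of the $z_n$-integral and the inner limits $\int_s^\infty$ and $\int_{s-z_n}^\infty$ must be transposed simultaneously, and the extra outer $\int_0^\infty dx$ must be inserted in the correct position. Once the bookkeeping is done consistently, no further calculation is required — the identity is just an application of the tail formula to the decomposition already proved in Theorem \ref{thm1}.
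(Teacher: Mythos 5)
Your proposal is correct and follows essentially the same route as the paper: the paper's own derivation of $\Psi_{2}(t)$ is exactly the tail formula $\frac{1}{P(Z_{n-k+1:n}>t)}\int_{0}^{\infty}P(T>t+x,\,Z_{n-k+1:n}>t)\,dx$, followed by the decomposition (\ref{k8}) with $s=t+x$ and the substitution of the copula-integral representation (\ref{k811}), with the independent case and the $t=0$ specialization handled just as you describe. No substantive difference.
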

\qed \\
\citet{asadi2006mean} investigated the MRL function $E(Z_{n-k+1:n}-t|Z_{1:n}>t)$ when  $Z_{1},\ldots,Z_{n}$ are independent and identical. They proved that
 \begin{align}
 E(Z_{n-k+1:n}-t|Z_{1:n}>t)
 =\sum_{m=0}^{n-k}\sum_{i=0}^{m} {n \choose m} {m \choose i}(-1)^{i}\frac{\int_{t}^{\infty} \overline{F}^{n-m+i}(z) dz}{\overline{F}^{n-m+i}(t)};~~t>0.
 \end{align}
 Now, we obtain $E(Z_{n-k+1:n}-t|Z_{1:n}>t)$ in terms of the joint cumulative distribution of $Z_{i}'$s
 when $Z_{1},\ldots,Z_{n}$ are exchangeable and dependent.
 First, we state and prove a lemma which is used to obtain the expression of $E(Z_{n-k+1:n}-t|Z_{1:n}>t).$
\begin{lemma}
\normalfont For $i\geq 1;$
\begin{align} \label{lemma 1}
P(t\leq Z_{1}&\leq t+x,\ldots,t \leq Z_{i} \leq t+x, Z_{i+1}\geq t+x,\ldots,Z_{n}>t+x)\nonumber\\&
=P(Z_{1}\leq t+x,\ldots, Z_{i} \leq t+x)\nonumber\\&
-\sum _{j=1}^{i} (-1)^{j+1} \binom{i}{j}P(Z_{1}\leq t,\ldots, Z_{j} \leq t, Z_{j+1}\leq t+x,\ldots,Z_{i}\leq t+x)\nonumber\\&-\sum _{j=1}^{n-i} (-1)^{j+1} \binom{n-i}{j} P(Z_{1}\leq t+x,\ldots, Z_{i+j}\leq t+x)\nonumber\\&
+\sum _{j=1}^{i}\sum _{m=1}^{n-i} (-1)^{j+m}\binom{i}{j} \binom{n-i}{m}
P(Z_{1}\leq t,\ldots,Z_{j}\leq t, Z_{j+1} \leq t+x,\ldots, Z_{i+m}\leq t+x)
\end{align}
\end{lemma}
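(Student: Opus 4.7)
The plan is to rewrite the target event as an intersection of one fixed ``positive'' event and several complements, apply inclusion--exclusion on the two families of complements, and then collapse the resulting subset sums by exchangeability.

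First I would decompose each constituent event. For $\ell\leq i$, write $\{t\leq Z_{\ell}\leq t+x\}=\{Z_{\ell}\leq t+x\}\setminus\{Z_{\ell}\leq t\}$; for $\ell>i$, write $\{Z_{\ell}> t+x\}=\Omega\setminus\{Z_{\ell}\leq t+x\}$. Setting $E_{\ell}=\{Z_{\ell}\leq t+x\}$ and $F_{\ell}=\{Z_{\ell}\leq t\}$ and using $F_{\ell}\subseteq E_{\ell}$ (so $E_{\ell}\setminus F_{\ell}=E_{\ell}\cap F_{\ell}^{c}$), the left-hand probability equals
$$P\Bigl(A\cap\bigcap_{\ell=1}^{i}F_{\ell}^{c}\cap\bigcap_{\ell=i+1}^{n}E_{\ell}^{c}\Bigr),\qquad A=\bigcap_{\ell=1}^{i}E_{\ell}.$$

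Next I would apply joint inclusion--exclusion over the two families of complements to obtain
$$\sum_{J\subseteq\{1,\ldots,i\}}\sum_{M\subseteq\{i+1,\ldots,n\}}(-1)^{|J|+|M|}\,P\Bigl(A\cap\bigcap_{\ell\in J}F_{\ell}\cap\bigcap_{\ell\in M}E_{\ell}\Bigr).$$
Because $Z_{1},\ldots,Z_{n}$ are exchangeable, each inner probability depends only on $j=|J|$ and $m=|M|$; replacing a generic $J$ by $\{1,\ldots,j\}$ and a generic $M$ by $\{i+1,\ldots,i+m\}$ produces the combinatorial weights $\binom{i}{j}\binom{n-i}{m}$, and the corresponding probability is $P(Z_{1}\leq t,\ldots,Z_{j}\leq t,Z_{j+1}\leq t+x,\ldots,Z_{i+m}\leq t+x)$.

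Finally I would split the double sum into the four cases according to whether each of $j,m$ is zero or positive. The case $(j,m)=(0,0)$ gives the first term $P(Z_{1}\leq t+x,\ldots,Z_{i}\leq t+x)$; the cases $(j\geq 1,m=0)$ and $(j=0,m\geq 1)$ produce the two single sums (after rewriting $(-1)^{j}=-(-1)^{j+1}$ to match the signs in the statement); the case $(j\geq 1,m\geq 1)$ yields the final double sum. The analytic content is just inclusion--exclusion together with exchangeability, so the only real obstacle is the bookkeeping: keeping the index ranges, signs, and the sizes of the coordinate blocks $\{1,\ldots,j\}$, $\{j+1,\ldots,i\}$, $\{i+1,\ldots,i+m\}$ consistent with the stated identity.
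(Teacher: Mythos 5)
Your proposal is correct and follows essentially the same route as the paper: the paper sets $Q=\bigcap_{\ell\le i}\{Z_\ell\le t+x\}$ and $P$ equal to the union of the complementary "bad" events, writes the target as $P(Q)-P(P\cap Q)$, and applies inclusion--exclusion to the union, which is exactly your expansion of $P\bigl(A\cap\bigcap F_\ell^{c}\cap\bigcap E_\ell^{c}\bigr)$ over subsets $J,M$ with the $(j,m)=(0,0)$ term separated out. Your write-up is if anything slightly more careful, since you state explicitly that exchangeability is what collapses the subset sums into the binomial coefficients --- a step the paper uses implicitly.
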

\begin{proof}
\normalfont Let \\
P=$(Z_{1}\leq t)\cup\ldots \cup  (Z_{i}\leq t) \cup (Z_{i+1}\leq t+x)\cup\ldots \cup  (Z_{n}\leq t+x),$
 and, \\
Q=$(Z_{1}\leq t+x,\ldots,Z_{i} \leq t+x).$\\
Then, $P^{c} \cap Q=(t\leq Z_{1}\leq t+x,\ldots,t \leq Z_{i} \leq t+x, Z_{i+1}\geq t+x,\ldots,Z_{n}>t+x),$
and, \\
$P \cap Q=(Z_{1}\leq t, Z_{2}\leq t+x,\ldots,Z_{i} \leq t+x)\cup\ldots\cup(Z_{1}\leq t+x,\ldots,Z_{i-1} \leq t+x,Z_{i}\leq t) \\
~~~~~~~~~~~~~~~~\cup(Z_{1}\leq t+x,\ldots,Z_{i+1} \leq t+x)\cup\ldots\cup(Z_{1}\leq t+x,\ldots,Z_{i} \leq t+x,Z_{n} \leq t+x).$\\
Note that $\normalfont P(P^{c} \cap Q)=P(Q)-P(P\cap Q).$
Hence by applying the principle of inclusion-exclusion, we get
the equation (\ref{lemma 1}).
\end{proof}
In the next lemma we compute $E(Z_{n-k+1:n}-t|Z_{1:n}>t)$.
\begin{lemma}
\begin{align} \label{lemma 2}
E(&Z_{n-k+1:n}-t|Z_{1:n}>t)\nonumber\\&=\frac{1}{P(Z_{1:n}>t)}\int_{0}^{\infty}\Bigg[1-\sum _{j=1}^{n}(-1)^{j-1}\binom{n}{j}P(Z_{1}\leq t+x,\ldots,Z_{j}\leq t+x)\Bigg]dx\nonumber\\&+\frac{1}{P(Z_{1:n}>t)}\int_{0}^{\infty}\Bigg[\sum _{i=1}^{n-k} \binom{n}{i}\Bigg[P(Z_{1}\leq t+x,\ldots, Z_{i} \leq t+x)\nonumber\\&
-\sum _{j=1}^{i} (-1)^{j+1} \binom{i}{j}P(Z_{1}\leq t,\ldots, Z_{j} \leq t, Z_{j+1}\leq t+x,\ldots,Z_{i}\leq t+x)\nonumber\\&-\sum _{j=1}^{n-i} (-1)^{j+1} \binom{n-i}{j} P(Z_{1}\leq t+x,\ldots, Z_{i+j}\leq t+x)\nonumber\\&
+\sum _{j=1}^{i}\sum _{m=1}^{n-i} (-1)^{j+m}\binom{i}{j} \binom{n-i}{m}
P(Z_{1}\leq t,\ldots,Z_{j}\leq t, Z_{j+1} \leq t+x,\ldots, Z_{i+m}\leq t+x)\Bigg]dx.
\end{align}
\end{lemma}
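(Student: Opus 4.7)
The plan is to express the conditional expectation as an integral of a survival probability, then decompose that probability by counting how many of the $Z_i$'s fall in the interval $(t,t+x]$, and finally apply the previous lemma together with inclusion-exclusion.

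First, I would use the standard identity that for a nonnegative random variable conditioned on an event $A$,
\[
E(Z_{n-k+1:n}-t \mid Z_{1:n}>t)=\frac{1}{P(Z_{1:n}>t)}\int_{0}^{\infty}P(Z_{n-k+1:n}>t+x,\,Z_{1:n}>t)\,dx,
\]
valid because $\{Z_{1:n}>t\}$ forces $Z_{n-k+1:n}>t$ so $Z_{n-k+1:n}-t\geq 0$ on this event. This reduces the problem to evaluating the joint survival probability inside the integral.

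Next, I would decompose the event $\{Z_{n-k+1:n}>t+x,\,Z_{1:n}>t\}$ according to the number $i$ of indices with $Z_j\in(t,t+x]$. Since $Z_{n-k+1:n}>t+x$ is equivalent to saying at most $n-k$ of the $Z_j$'s are $\leq t+x$, and $Z_{1:n}>t$ forces every $Z_j>t$, the value of $i$ ranges over $\{0,1,\dots,n-k\}$. Exchangeability lets me fix a particular choice of the $i$ indices that land in $(t,t+x]$ and multiply by $\binom{n}{i}$, giving
\[
P(Z_{n-k+1:n}>t+x,\,Z_{1:n}>t)=\sum_{i=0}^{n-k}\binom{n}{i}P\bigl(t<Z_{1}\leq t+x,\dots,t<Z_{i}\leq t+x,\,Z_{i+1}>t+x,\dots,Z_{n}>t+x\bigr).
\]

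Then I would split off the $i=0$ term, which is simply $P(Z_{1:n}>t+x)$, and expand it by inclusion-exclusion as
\[
P(Z_{1:n}>t+x)=1-\sum_{j=1}^{n}(-1)^{j-1}\binom{n}{j}P(Z_{1}\leq t+x,\dots,Z_{j}\leq t+x),
\]
again using exchangeability to collapse the union's marginals. This yields exactly the first bracketed expression in the claimed identity. For each $i\geq 1$, I would invoke the previous lemma (equation (\ref{lemma 1})) directly on the corresponding summand, which reproduces the four-term bracketed quantity appearing in the second integral of the target formula.

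The routine part is assembling these pieces and multiplying by $\binom{n}{i}$ inside the integral; the only subtle point is the bookkeeping step of verifying that the $i=0$ contribution separates cleanly from the $i\geq 1$ contributions so that the two integrals in the statement emerge in exactly the form given. Once that is checked, integrating each term over $x\in(0,\infty)$ and dividing by $P(Z_{1:n}>t)$ concludes the proof.
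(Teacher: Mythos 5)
Your proposal is correct and follows essentially the same route as the paper: the same integral representation of the conditional expectation, the same decomposition of $P(Z_{n-k+1:n}>t+x,\,Z_{1:n}>t)$ over the number $i$ of components falling in $(t,t+x]$ with the $i=0$ term split off and expanded by inclusion--exclusion, and the same application of the preceding lemma to the $i\geq 1$ terms. No gaps.
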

\begin{proof}
\begin{align} \label{st}
E(Z_{n-k+1:n}-t|Z_{1:n}>t)=\frac{1}{P(Z_{1:n}>t)}\int_{0}^{\infty}P(Z_{n-k+1:n}>t+x,Z_{1:n}>t)dx
\end{align}
\normalfont
The joint reliability function of $(Z_{n-k+1:n},Z_{1:n})$ is given by
\begin{align}\label{k811110}
P(Z_{n-k+1:n}&>t+x,Z_{1:n}>t)\nonumber\\&
=\sum _{i=0}^{n-k} \binom{n}{i}P(t\leq Z_{1}\leq t+x,\ldots,t \leq Z_{i} \leq t+x, Z_{i+1}\geq t+x,\ldots,Z_{n}>t+x)\nonumber\\&
=P(Z_{1}\geq t+x,\ldots,Z_{n}>t+x)\nonumber\\&
+\sum _{i=1}^{n-k} \binom{n}{i}P(t\leq Z_{1}\leq t+x,\ldots,t \leq Z_{i} \leq t+x, Z_{i+1}\geq t+x,\ldots,Z_{n}>t+x).
\end{align}
By using equations (\ref{lemma 1}), (\ref{st}) and (\ref{k811110}), we have equation (\ref{lemma 2}).
\end{proof}
\qed \\
\begin{remark}
\normalfont
If $Z_{1},\ldots,Z_{n}$ are independent, then
\begin{align}
E(Z_{n-k+1:n}&-t|Z_{1:n}>t)\nonumber\\&
=\frac{1}{\overline{F}^{n}(t)}\int_{0}^{\infty}\Bigg[1+\sum _{j=1}^{n}(-1)^{j}\binom{n}{j}F^{i}(t+x)
+\sum _{i=1}^{n-k} \binom{n}{i}\Bigg[F^{i}(t+x)
+\sum _{j=1}^{i} (-1)^{j} \binom{i}{j}F^{j}(t)\nonumber\\&F^{i-j}(t+x)+\sum _{j=1}^{n-i} (-1)^{j} \binom{n-i}{j} F^{i+j}(t+x)+\sum _{j=1}^{i}\sum _{m=1}^{n-i} (-1)^{j+m}\binom{i}{j} \binom{n-i}{m}F^{j}(t)F^{i+m-j}(t+x)\Bigg]dx\nonumber\\&
=\frac{1}{\overline{F}^{n}(t)} \int_{0}^{\infty}\Bigg[\overline{F}^{n}(t+x)
+\sum _{i=1}^{n-k} \binom{n}{i}\Bigg[F^{i}(t+x)\sum _{j=0}^{n-i}(-1)^{j} \binom{n-i}{j} F^{j}(t+x)+
\sum _{j=1}^{i} (-1)^{j} \binom{i}{j}F^{j}(t)\nonumber\\&F^{i-j}(t+x)\Bigg[1+\sum _{m=1}^{n-i} (-1)^{m} \binom{n-i}{m}F^{m}(t+x)
\Bigg]\nonumber\\&
=\frac{1}{\overline{F}^{n}(t)} \int_{0}^{\infty}\Bigg[\overline{F}^{n}(t+x)+\sum _{i=1}^{n-k}\binom{n}{i}
\Bigg[\sum _{j=0}^{n-i}(-1)^{j} \binom{n-i}{j} F^{j}(t+x)\Bigg] \times \nonumber\\&
~~~~~~~~~~~~~~~~~~~~~~~~~~~~~~~~~~~~~~~~~~~~~~~~~~~~~~~~~~~~~~~~~~~~~~~~\Bigg[\sum _{j=0}^{i} (-1)^{j} \binom{i}{j}F^{j}(t)F^{i-j}(t+x)\Bigg]\Bigg]dx\nonumber\\&
=\frac{1}{\overline{F}^{n}(t)} \sum _{i=0}^{n-k}\binom{n}{i}\int_{0}^{\infty}(F(t+x)-F(t))^{i}\overline{F}^{n-i}(t+x)dx\nonumber\\&
=\sum _{i=0}^{n-k}\binom{n}{i}\int_{t}^{\infty}(1-\frac{\overline{F}(x)}{\overline{F}(t)})^{i}(\frac{\overline{F}(x)}{\overline{F}(t)})^{n-i}dx\nonumber\\&
=\sum_{m=0}^{n-k}\sum_{i=0}^{m} {n \choose m} {m \choose i}(-1)^{i}\frac{\int_{t}^{\infty} \overline{F}^{n-m+i}(z) dz}{\overline{F}^{n-m+i}(t)}.
\end{align}
\end{remark}
\qed \\
In terms of copula functions, the mean residual life function $E(Z_{n-k+1:n}-t|Z_{1:n}>t)$ is given by
\begin{align}
E(&Z_{n-k+1:n}-t|Z_{1:n}>t)\nonumber\\&=\frac{1}{P(Z_{1:n}>t)}\int_{0}^{\infty}\Bigg[1-\sum _{j=1}^{n}(-1)^{j+1}\binom{n}{j}
C(\underbrace{F(t+x),\ldots,F(t+x)}_{\text{j}},1,\ldots,1)\Bigg]dx\nonumber\\&
+\frac{1}{P(Z_{1:n}>t)}\int_{0}^{\infty}\Bigg[\sum _{i=1}^{n-k} \binom{n}{i}\Bigg[C(\underbrace{F(t+x),\ldots,F(t+x)}_{\text{i}},1,\ldots,1)\nonumber\\&
-\sum _{j=1}^{i} (-1)^{j+1} \binom{i}{j}C(\underbrace{F(t),\ldots,F(t)}_{\text{j}},\underbrace{F(t+x),\ldots,F(t+x)}_{\text{i-j}},1,\ldots,1)
\nonumber\\&-\sum _{j=1}^{n-i} (-1)^{j+1} \binom{n-i}{j} C(\underbrace{F(t+x),\ldots,F(t+x)}_{\text{i+j}},1,\ldots,1)\nonumber\\&
+\sum _{j=1}^{i}\sum _{m=1}^{n-i} (-1)^{j+m}\binom{i}{j} \binom{n-i}{m}
C(\underbrace{F(t),\ldots,F(t)}_{\text{j}},\underbrace{F(t+x),\ldots,F(t+x)}_{\text{i+m-j}},1,\ldots,1)\Bigg]dx.
\end{align}

In the next theorem, we obtain the mean residual function of $T$ under~~ the condition that $Z_{1:n}>t$,
\begin{theorem}
\begin{align} \label{k59}
&\Psi_{3}(t)=E(T-t|Z_{1:n}>t)=E(Z_{n-k+1:n}-t|Z_{1:n}>t)+
\frac{1}{P(Z_{1:n}>t)B(n-k+1,k)} \times \nonumber\\& \Bigg[\int_{0}^{\infty}\int_{t}^{\infty}\underbrace{\int_{t}^{z_{n}}\ldots\int_{t}^{z_{n}}}_{\text{n-k}}\underbrace{ \int_{z_{n}+x}^{\infty}\ldots\int_{z_{n}+x}^{\infty}}_{\text{k-1}}\int_{x}^{\infty}  \frac{\partial^{n+1} C(F(z_{1}),\ldots,F(z_{n}),G(z))}{\partial F(z_{1})\ldots\partial F(z_{n})\partial G(z)}\times  \nonumber\\&~~~~~~~~~~~~~~~~~~~~~~~~~~~~~~~~~~~~~~~~~~~~~~~~~~~~~~~ \Bigg(\prod_{i=1}^{n} f(z_{i})\Bigg)~ g(z) ~ dzdz_{1} \ldots dz_{k-1}dz_{k}\ldots dz_{n-1}dz_{n}dx\Bigg].
\end{align}
\end{theorem}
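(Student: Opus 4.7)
The plan is to begin from the standard identity $\Psi_{3}(t)=\frac{1}{P(Z_{1:n}>t)}\int_{0}^{\infty} P(T>t+x,\,Z_{1:n}>t)\,dx$ and decompose the integrand according to the status of the $k$-out-of-$n$ subsystem at time $t+x$. Since $\{Z_{1:n}>t\}\subseteq\{Z_{n-k+1:n}>t\}$, I split the event $\{T>t+x,\,Z_{1:n}>t\}$ into $\{Z_{n-k+1:n}>t+x,\,Z_{1:n}>t\}$ and $\{t<Z_{n-k+1:n}\leq t+x,\,T>t+x,\,Z_{1:n}>t\}$. On the first piece one has $T\geq Z_{n-k+1:n}>t+x$ automatically, so integrating in $x$ and dividing by $P(Z_{1:n}>t)$ yields exactly $E(Z_{n-k+1:n}-t\mid Z_{1:n}>t)$, whose copula expression has been obtained in the preceding lemma and its following rewriting.

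For the second piece I use $T=Z_{n-k+1:n}+\min\{Z_{n-k+2:n}-Z_{n-k+1:n},Z\}$: conditional on $Z_{n-k+1:n}\leq t+x$, the event $T>t+x$ is equivalent to the two simultaneous conditions $Z_{n-k+2:n}>t+x$ and $Z>t+x-Z_{n-k+1:n}$. Then, exactly as in the proof of Theorem \ref{thm1}, exchangeability allows me to fix $Z_{n-k+1:n}=Z_n$ and designate $Z_{1},\dots,Z_{k-1}$ as the $k-1$ components lying above $Z_n$; the resulting combinatorial factor is $n\binom{n-1}{n-k}=1/B(n-k+1,k)$. Writing the joint density via Sklar's theorem as in (\ref{3}), the probability $P(T>t+x,\,t<Z_{n-k+1:n}\leq t+x,\,Z_{1:n}>t)$ becomes a multiple integral with $z_{n}\in(t,t+x]$, the $n-k$ smaller components in $(t,z_{n}]$, the $k-1$ larger components in $(t+x,\infty)$, and the standby variable $z\in(t+x-z_{n},\infty)$.

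The last step is to apply Fubini to interchange the outer $dx$-integral with the $dz_{n}$-integral: after the swap $z_{n}$ runs over $(t,\infty)$ and, for fixed $z_{n}$, $x$ runs over $[z_{n}-t,\infty)$. Substituting $u=t+x-z_{n}$ (so $du=dx$ and $u$ ranges over $[0,\infty)$) sends $t+x\mapsto z_{n}+u$ and $t+x-z_{n}\mapsto u$, converting the inner bounds to $z_{i}>z_{n}+u$ for $i=1,\dots,k-1$ and $z>u$. This produces precisely the nested integral displayed in (\ref{k59}) with $u$ playing the role of the outer variable $x$ there; combining with the first piece then delivers the claimed formula for $\Psi_{3}(t)$.

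The main obstacle is the bookkeeping of this Fubini-plus-substitution step: one must verify that the translated bounds on the $k-1$ upper-surviving components and on the standby $Z$ are consistent and that the outer ranges $u\in(0,\infty)$ and $z_{n}\in(t,\infty)$ arise correctly from collapsing the original constraint $t<z_{n}\leq t+x$ with $x\geq 0$. Everything else is a direct reuse of the exchangeability argument and of the copula representation already developed for Theorem \ref{thm1}.
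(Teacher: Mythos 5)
Your proposal is correct and reaches the theorem by essentially the same route as the paper: the same split into $E(Z_{n-k+1:n}-t\mid Z_{1:n}>t)$ plus a standby contribution, the same exchangeability step fixing $Z_{n-k+1:n}=Z_{n}$ and designating $Z_{1},\ldots,Z_{k-1}$ as the survivors with factor $n\binom{n-1}{n-k}=1/B(n-k+1,k)$, and the same copula-density integral. The only difference is that the paper writes $T-t=(Z_{n-k+1:n}-t)+\min\{Z_{n-k+2:n}-Z_{n-k+1:n},Z\}$ and applies $E(W\mid A)=\int_{0}^{\infty}P(W>x,\,A)\,dx/P(A)$ directly to the nonnegative increment $W=\min\{Z_{n-k+2:n}-Z_{n-k+1:n},Z\}$, which yields the bounds $z_{i}>z_{n}+x$ and $z>x$ immediately and thereby sidesteps the Fubini-plus-substitution bookkeeping you identify as the main obstacle.
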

\begin{proof}
\normalfont
It is clear that
\begin{align} \label{k559}
E(T-t|Z_{1:n}>t)=E(Z_{n-k+1:n}-t|Z_{1:n}>t)+E(\min(Z_{n-k+2:n}-Z_{n-k+1:n},Z)|Z_{1:n}>t)).
\end{align}
We can write,
\begin{align} \label{k5590}
E(&\min(Z_{n-k+2:n}-Z_{n-k+1:n},Z)|Z_{1:n}>t))\nonumber\\&=\frac{1}{P(Z_{1:n}>t)}\int_{0}^{\infty}P(\min(Z_{n-k+2:n}-Z_{n-k+1:n},Z)>x,Z_{1:n}>t) dx\nonumber\\&
=\frac{n}{P(Z_{1:n}>t)}\int_{0}^{\infty}P(Z_{n-k+2:n}>Z_{n}+x,Z_{1:n}>t,Z_{n-k+1:n}=Z_{n},Z>x) dx\nonumber\\&
=\frac{n}{P(Z_{1:n}>t)}\binom{n-1}{n-k} \times \nonumber\\&
\int_{0}^{\infty}P(Z_{1}>x+Z_{n},\ldots, Z_{k-1}>x+Z_{n},Z>x,t<Z_{k}<Z_{n},\ldots,t<Z_{n-1}< Z_{n},Z_{n}>t) dx\nonumber\\&
=\frac{1}{P(Z_{1:n}>t)B(n-k+1,k)} \times \nonumber\\& \Bigg[\int_{0}^{\infty}\int_{t}^{\infty}\underbrace{\int_{t}^{z_{n}}\ldots\int_{t}^{z_{n}}}_{\text{n-k}}\underbrace{ \int_{z_{n}+x}^{\infty}\ldots\int_{z_{n}+x}^{\infty}}_{\text{k-1}}\int_{x}^{\infty}  \frac{\partial^{n+1} C(F(z_{1}),\ldots,F(z_{n}),G(z))}{\partial F(z_{1})\ldots\partial F(z_{n})\partial G(z)}\times  \nonumber\\&~~~~~~~~~~~~~~~~~~~~~~~~~~~~~~~~~~~~~~~~~~~~~~~~~~~~~~~ \Bigg(\prod_{i=1}^{n} f(z_{i})\Bigg)~ g(z) ~ dzdz_{1} \ldots dz_{k-1}dz_{k}\ldots dz_{n-1}dz_{n}dx\Bigg].
\end{align}
\end{proof}
\begin{remark}
\normalfont
If $Z_{1},\ldots,Z_{n},Z$ are independent, then
\begin{align}
E(T&-t|Z_{1:n}>t)=E(Z_{n-k+1:n}-t|Z_{1:n}>t)+
\frac{1}{\overline{F}^{n}(t)B(n-k+1,k)} \times \nonumber\\& \Bigg[\int_{0}^{\infty}\int_{t}^{\infty}\underbrace{\int_{t}^{z_{n}}\ldots\int_{t}^{z_{n}}}_{\text{n-k}}\underbrace{ \int_{z_{n}+x}^{\infty}\ldots\int_{z_{n}+x}^{\infty}}_{\text{k-1}}\int_{x}^{\infty}\Bigg(\prod_{i=1}^{n} f(z_{i})\Bigg)~ g(z) ~ dzdz_{1} \ldots dz_{k-1}dz_{k}\ldots dz_{n-1}dz_{n}dx\Bigg]\nonumber\\&
=\sum_{m=0}^{n-k}\sum_{i=0}^{m} {n \choose m} {m \choose i}(-1)^{i}\frac{\int_{t}^{\infty} \overline{F}^{n-m+i}(z) dz}{\overline{F}^{n-m+i}(t)}+\frac{1}{B(n-k+1,k)\overline{F}^{n}(t)} \times \nonumber\\&
~~~~~~~~~~~~~~~~~~~~~~~~~~~~~~~~~~~~~~~~~~~~~~~~~~~~~~\int_{0}^{\infty}\int_{t}^{\infty} \overline{F}^{k-1}(z_{n}+x) (F(z_{n})-F(t))^{n-k}\overline{G}(x)f(z_{n})dz_{n}dx
\end{align}
This case is considered in \citet{eryilmaz2012mean}.
\end{remark}
\qed \\
In the next section, we illustrate our theoretical results.
\section{Illustrations}
\normalfont
We consider a 2-out-of-3 system equipped with one cold standby component.
Then, the lifetime of the system is denoted by
\begin{align}\label{2out3}
&T=Z_{2:3}+ \min\{Z_{3:3}-Z_{2:3},Z\}.
\end{align}
 By using equations (\ref{m1}) and (\ref{k8111}),
 \begingroup
\allowdisplaybreaks
\begin{align}\label{a26}
P(T>s)&=1-3C(F(s),F(s),1,1)+2C(F(s),F(s),F(s),1)+\nonumber\\&
6\int_{0}^{s}\int_{0}^{z_{3}}\int_{s}^{\infty}\int_{s-z_{3}}^{\infty}\frac{\partial^{4} C(F(z_{1}),F(z_{2}),F(z_{3}),G(z))}{\partial F(z_{1})\partial F(z_{2})\partial F(z_{3})\partial G(z)}\Bigg(\prod_{i=1}^{3} f(z_{i})\Bigg) g(z) ~ dzdz_{1} dz_{2}dz_{3},
\end{align}
\endgroup
and
\begin{align}
P(Z_{2:3}>s)=1-3C(F(s),F(s),1,1)+2C(F(s),F(s),F(s),1).
\end{align}
Here, we assume that the copula function $C$ is a FGM copula function.
The expression of the four-dimensional FGM  copula  function $C$ is given by
\begingroup
\allowdisplaybreaks
\begin{align}\label{a2}
 C(F(z_{1}),F(z_{2}),F(z_{3}),G(z))=&G(z)\Bigg(\prod_{i=1}^{3}F(z_{i})\Bigg)\Bigg[1+\theta _{11} \sum_{1\leq i<j \leq 3} (1-F(z_{i}))(1-F(z_{j}))+\theta _{12} \sum_{i=1}^{3} (1-\nonumber\\&F(z_{i}))(1-G(z))+\theta _{21} \prod_{i=1}^{3} (1-F(z_{i}))+
 \theta _{22} \sum_{1\leq i<j \leq 3} (1-F(z_{i}))(1-\nonumber\\&F(z_{j}))(1-G(z))+\theta _{31} \Bigg( \prod_{i=1}^{3} (1-F(z_{i}))\Bigg)(1-G(z))\Bigg],
 \end{align}
\endgroup
where the parameters $\theta _{11}, \theta _{12},\theta _{21}, \theta _{22}, \theta _{31}~~(-1 \leq \theta _{11}, \theta _{12},\theta _{21}, \theta _{22}, \theta _{31} \leq 1)$ represents the dependence degrees. \\
Therefore, from equation (\ref{a2}),
\begingroup
\allowdisplaybreaks
\begin{align}\label{a27}
 \frac{\partial^{4} C(F(z_{1}),F(z_{2}),F(z_{3}),G(z))}{\partial F(z_{1})\partial F(z_{2})\partial F(z_{3})\partial G(z)}&
 =1+\theta _{11} \sum_{1\leq i<j \leq 3} (1-2F(z_{i}))(1-2F(z_{j}))+\theta _{12}\sum_{i=1}^{3} (1-2F(z_{i}))(1-\nonumber\\&2G(z))
  +\theta _{21} \prod_{i=1}^{3} (1-2F(z_{i}))+ \theta _{22} \sum_{1\leq i<j \leq 3}(1-2F(z_{i}))(1-2F(z_{j}))(1-\nonumber\\&2G(z))
  +\theta _{31} \Bigg( \prod_{i=1}^{3}(1-2F(z_{i}))\Bigg)(1-2G(z)).
\end{align}
\endgroup
By using equations (\ref{a26})-(\ref{a27}), we have
\begingroup
\allowdisplaybreaks
\begin{align}
P( Z_{2:3}>s)&=1-3F^{2}(s)+2F^{3}(s)-3\theta _{11}F^{2}(s)\overline{F}^{2}(s)(1-2F(s))
+2\theta _{21}F^{3}(s)\overline{F}^{3}(s),
\end{align}
\endgroup
and,
\begingroup
\allowdisplaybreaks
\begin{align}
P(T>s)=P( Z_{2:3}>s)&+6\overline{F}(s)\int_{0}^{s}\Bigg[1-(\theta _{11}-\theta _{22}G(s-z))[F(s)\overline{F}(z)+(F(s)-\overline{F}(z))(1-2F(z))]\nonumber\\&+\theta _{12}(F(s)-2+3F(z))G(s-z)-(\theta _{21}-\theta _{31}G(s-z))(1-2F(z))F(s)\overline{F}(z)\Bigg]\nonumber\\&
\times F(z)\overline{G}(s-z)f(z) dz.
\end{align}
\endgroup
Now, we investigate this results for the following cases:\\
Case I: $F(z)=G(z)=1-e^{-2z},~z>0,$ that is, all the components are have exponential lifetime distributions with parameter 2.\\
Case II: $F(z)=G(z)=1-(1+z)^{-2},~z>0,$ that is, all the components are have Pareto type II lifetime distributions.\\
Case III: $F(z)=G(z)=1-e^{-z^{2}},~z>0,$ that is, all the components are have Weibull lifetime distributions      \\
Figure 1 depict the graphs of $P(Z_{2:3}>s)$, $P(T>s)$ for all the cases.
\begin{figure} [h!]
  \centering
\subfloat[Case I]{\includegraphics[width=0.5\textwidth]{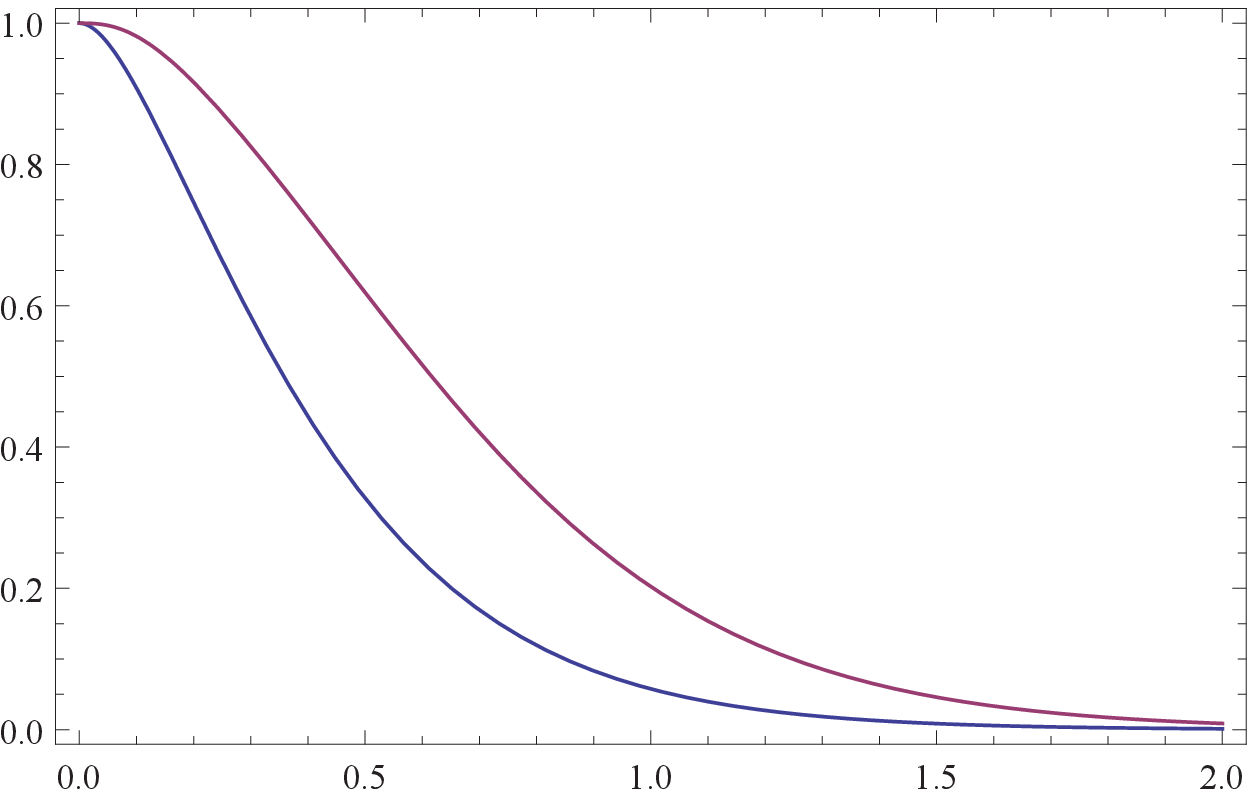}}
 \hfill
 \subfloat[Case II]{\includegraphics[width=0.5\textwidth]{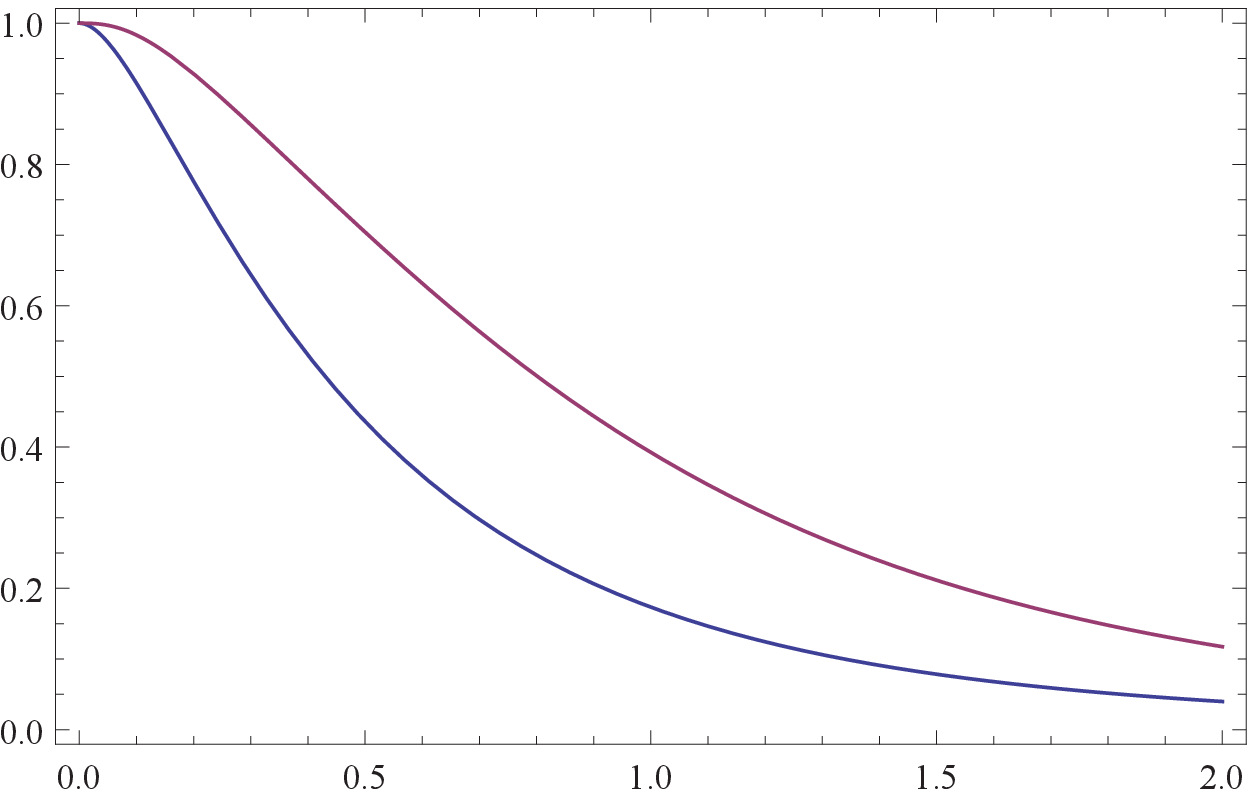}}
 \hfill
 \subfloat[Case III]{\includegraphics[width=0.5\textwidth]{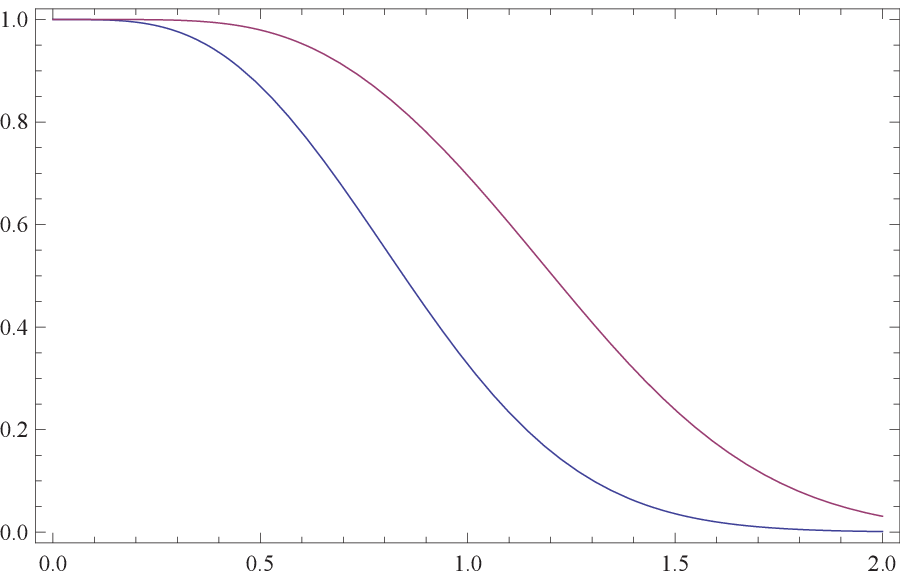}}
 \caption{Plots of  $P(Z_{2:3}>s)$(blue line) and $P(T>s)$(red line)  when $\theta _{11} = .2;~\theta _{12} = .3; ~\theta _{21} = .5;~  \theta _{22}= .6;~ \theta _{31} = .7$.}
\end{figure}
\\
It is intuitive that, mean time to failure as well as manufacturing cost of a system will increase if we use a cold standby into the system.
To evaluate this changes, we first define the cost per unit of time, which are
$$ C_{Z_{n-k+1:n}}=\frac{nc}{E(Z_{n-k+1:n})},$$
 and,
 $$ C_{T}=\frac{(n+1)c}{E(T)}.$$  where $c$ be the acquisition cost for per component. In Tables I-III, we obtain the values of
 $E(Z_{n-k+1:n}),$ $E(T),$ $C_{Z_{n-k+1:n}},$ and $C_{T}$ when $c=1.$
 We see that the cost per unit of time of the system changes significantly (as interpreted in Tables [1-3]).
 \begin{table}[!ht]
\caption{Mean time to failure and mean cost rates for a 2-out-of-3 system with and without a standby component (Case I)}\label{T1}
\centering
\begin{tabular}{lllllllllll}
\hline
\hline
$\theta _{11}$&$\theta _{12}$&$\theta _{21}$&$\theta _{22}$ &$\theta _{31}$&$E(Z_{2:3})$&$E(T)$&$C_{Z_{2:3}}$&$C_{T}$\\[2.5pt]
\hline
 $0$ & $0$ & $0$ & $0$ & $0$ & $0.416667$ & $0.666667$& $7.199994$ & $5.999997$\\[2.5pt]
  $.1$ & $.2$ & $.3$ & $.4$ & $.5$ & $0.424167$ & $0.68125$ & $7.072685$ & $5.871559$\\[2.5pt]
 $.2$ & $.3$ & $.5$ & $.6$ & $.7$ & $0.430000$ & $0.688333$ & $9.302325$ & $5.811140$\\[2.5pt]
\hline
\hline
\end{tabular}
\end{table}
\begin{table}[!ht]
\caption{Mean time to failure and mean cost rates for a 2-out-of-3 system with and without a standby component (Case II)}\label{T1}
\centering
\begin{tabular}{lllllllllll}
\hline
\hline
$\theta _{11}$&$\theta _{12}$&$\theta _{21}$&$\theta _{22}$ &$\theta _{31}$&$E(Z_{2:3})$&$E(T)$&$C_{Z_{2:3}}$&$C_{T}$\\[2.5pt]
\hline
 $0$ & $0$ & $0$ & $0$ & $0$ & $0.6$ & $1.00576$& $5$ & $3.977092$\\[2.5pt]
 $.1$ & $.2$ & $.3$ & $.4$ & $.5$ & $0.615931$ & $1.04269$&  $4.870675$&  $3.836231$ \\[2.5pt]
 $.2$ & $.3$ & $.5$ & $.6$ & $.7$ & $0.629091$ & $1.063$& $4.768785$ & $3.762935$\\[2.5pt]
\hline
\hline
\end{tabular}
\end{table}
\begin{table}[!ht]
\caption{Mean time to failure and mean cost rates for a 2-out-of-3 system with and without a standby component (Case III)}\label{T1}
\centering
\begin{tabular}{lllllllllll}
\hline
\hline
$\theta _{11}$&$\theta _{12}$&$\theta _{21}$&$\theta _{22}$ &$\theta _{31}$&$E(Z_{2:3})$&$E(T)$&$C_{Z_{2:3}}$&$C_{T}$\\[2.5pt]
\hline
 $0$ & $0$ & $0$ & $0$ & $0$ & $0.856644$ & $1.21998$& $3.502054$ & $3.278742$\\[2.5pt]
 $.1$ & $.2$ & $.3$ & $.4$ & $.5$ & $0.863228$ & $1.22371$ & $3.475327$ & $3.268748$\\[2.5pt]
 $.2$ & $.3$ & $.5$ & $.6$ & $.7$ & $0.867908$ & $1.22308$ & $3.456587$ & $3.270432$\\[2.5pt]
\hline
\hline
\end{tabular}
\end{table}
\newpage
Now, we numerically examine how the component's dependence effects the shape of mean residual life functions.
We obtain the plots of $\Psi_{1}(t)$, $\Psi_{2}(t),$ and $\Psi_{3}(t)$ for the system defined by equation (\ref{2out3}) when $F(z)=G(z)=1-e^{-z},~z>0$ for the following cases:\\
Case I:   $\theta _{11} = 0;~\theta _{12} = 0; ~\theta _{21} = 0;~  \theta _{22}= 0;~ \theta _{31} = 0$ \\
Case II: $\theta _{11} = .2;~\theta _{12} = .3; ~\theta _{21} = .5;~  \theta _{22}= .6;~ \theta _{31} = .7$\\
In Figure 2, we see that $\Psi_{3}(t)$ do not change along time for Case I, that is, when components are independent which is due to the memoryless property of exponential distribution. But it is not true when components are dependent (Case II). We see that $\Psi_{3}(t)$ first increase and then decrease for Case II.
  We observe that $\Psi_{1}(t) \leq \Psi_{2}(t) \leq \Psi_{3}(t)$  for all the cases.                              It is also see that $\Psi_{1}(t),$ and $\Psi_{2}(t)$ are decreasing over time.
\begin{figure} [h!]
  \centering
\subfloat[Case I]{\includegraphics[width=0.5\textwidth]{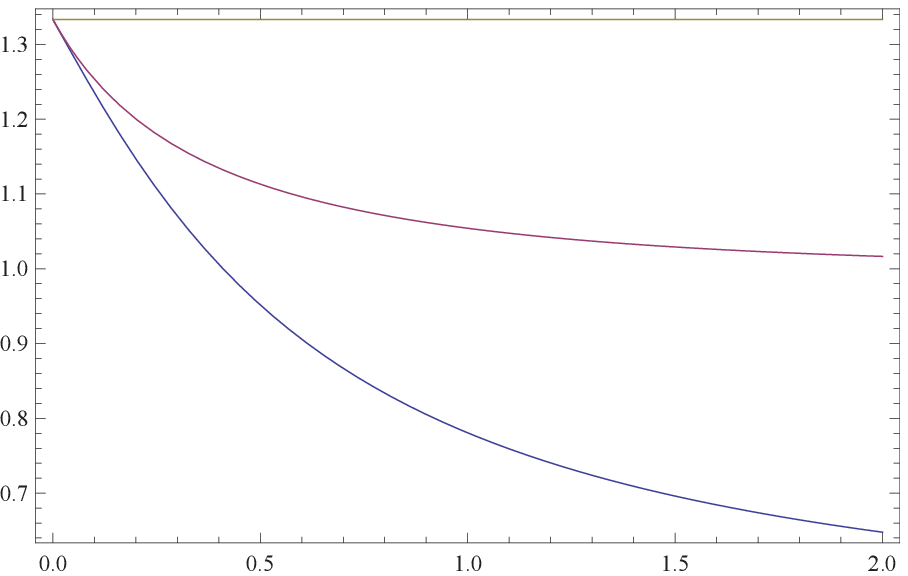}}
 \hfill
 \subfloat[Case II]{\includegraphics[width=0.5\textwidth]{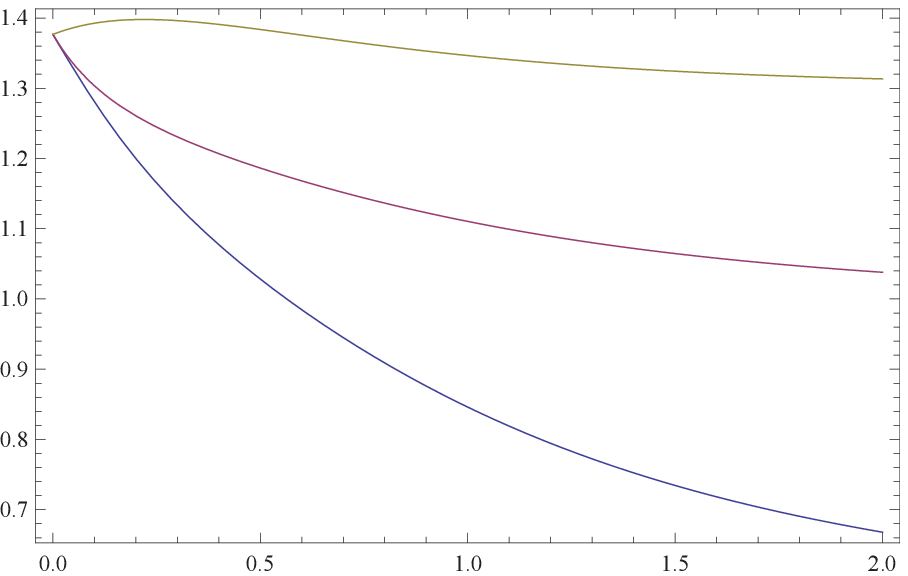}}
 \caption{Plots of  $\Psi_{1}(t)$(blue line), $\Psi_{2}(t)$(red line) and $\Psi_{3}(t)$(brown line) for 2-out of -3 system when $F(z)=G(z)=1-e^{-z},~z>0,$.}
\end{figure}
\section{Conclusions}
Here, we study  a $k$-out-of-$n$ system consisting of $n$ exchangeable components equipped with a cold standby component when all the components are assumed to be dependent.  An exact expression has been obtained for the survival  function of the system in terms of copula functions.  We have also presented explicit expressions for three different mean residual functions of the system. We illustrate the theoretical results  given in this article.
 The results established in this article generalize the results in \citet{eryilmaz2012mean} as
 pointed out in Remark 1, Remark 4 and Corollary 2.
\bibliographystyle{apalike}
\bibliography{ref}
\end{document}